\NeedsTeXFormat{LaTeX2e} 
\documentclass{amsart}

\usepackage[T1]{fontenc}
\usepackage[utf8x]{inputenc}
\usepackage{lmodern, euscript}
\usepackage{enumerate, graphics}
\usepackage{mathabx}
 \usepackage[all,cmtip]{xy}

\usepackage{amsfonts,amsmath,amstext,amsbsy,amssymb,amsbsy,
  amsopn,amsthm,amscd} 
\usepackage[T1]{fontenc}
\usepackage{hyperref}
\usepackage{mathrsfs}

\RequirePackage[dvipsnames]{xcolor} 
\definecolor{halfgray}{gray}{0.55} 
\definecolor{webgreen}{rgb}{0,0.5,0}
\definecolor{webbrown}{rgb}{.6,0,0} \hypersetup{%
  colorlinks=true, linktocpage=true, pdfstartpage=3,
  pdfstartview=FitV,%
  breaklinks=true, pdfpagemode=UseNone, pageanchor=true,
  pdfpagemode=UseOutlines,%
  plainpages=false, bookmarksnumbered, bookmarksopen=true,
  bookmarksopenlevel=1,%
  hypertexnames=true,
  pdfhighlight=/O,
  urlcolor=webbrown, linkcolor=RoyalBlue,
  citecolor=webgreen, 
  pdftitle={Recap},%
  pdfsubject={},%
  pdfkeywords={},%
  pdfcreator={pdfLaTeX},%
  pdfproducer={LaTeX with hyperref}%
}


\newtheorem{theorem}{Theorem}[section]
\newtheorem{add}[theorem]{Addendum}
\newtheorem{lemma}[theorem]{Lemma}
\newtheorem{corollary}[theorem]{Corollary}
\newtheorem{proposition}[theorem]{Proposition}
\theoremstyle{definition}

\newtheorem{remark}[theorem]{Remark}


\newcommand{\field}[1]{\mathbb{#1}}
\newcommand{\R}{\field{R}}

\newcommand{\cF}{\mathcal{F}}

\newcommand{\cH}{\mathcal{H}}

\newcommand{\cU}{\mathcal{U}}

\newcommand{\ie}{{\it i.e., } }
\newcommand{\eg}{{\it e.g., } }

\renewcommand{\phi}{\varphi}

\newcommand{\eps}{\varepsilon}

\renewcommand{\|}{\,\Vert\,}




\begin{document}
\baselineskip=14pt

\title{Smooth rigidity for higher dimensional contact Anosov flows}
\author {Andrey Gogolev and Federico Rodriguez Hertz}\thanks{The authors were partially supported by NSF grants DMS-1955564 and DMS-1900778, respectively}

 \address{Department of Mathematics, The Ohio State University,  Columbus, OH 43210, USA}
\email{gogolyev.1@osu.edu}

\address{Department of Mathematics, The Pennsylvania State University,  University Park, PA 16802, USA}
\email{hertz@math.psu.edu}

\begin{abstract} 
  \begin{sloppypar}
  We apply the matching functions technique in the setting of contact Anosov flows which satisfy a bunching assumption. This allows us to generalize the 3-dimensional rigidity result of Feldman-Ornstein~\cite{FO}. Namely, we show that if two such Anosov flows are $C^0$ conjugate, then they are $C^{r}$ conjugate for some $r\in[1,2)$ or even $C^\infty$ conjugate under some additional assumptions. This, for example, applies to $1/4$-pinched geodesic flows on compact Riemannian manifolds of negative sectional curvature. We can also use our result to recover Hamendst\"adt's marked length spectrum rigidity result for real hyperbolic manifolds.
  \end{sloppypar}
\end{abstract}
\maketitle

\section{Introduction}

Let $M$ be a closed smooth Riemannian manifold. Recall that a smooth flow $\phi^t\colon M\to M$ is called {\it Anosov} if the tangent bundle admits a $D\phi^t$-invariant splitting $TM=E^s\oplus X\oplus E^u$, where $X$ is the generator of $\phi^t$, $E^s$ is uniformly contracting and $E^u$ is uniformly expanding under $D\phi^t$. 

In this paper will always assume that $M$ has an odd dimension $2d+1$ and that $M$ is equipped with a contact form $\alpha$. Recall that a 1-form $\alpha$ is called {\it contact} if $\alpha\wedge (d\alpha)^d$ is a non-vanishing top-dimensional form. We will consider Anosov flows $\phi^t$ which are also contact. This means that $\phi^t$ preserves a contact form $\alpha$: $\alpha(D\phi^tv)=\alpha(v)$ for all $v\in T M$ and all $t\in\R$ or, equivalently, $X\alpha=0$. 

Basic examples of contact Anosov flows are geodesic flows in negative sectional curvature and more sophisticated examples can be constructed, in particular, in dimension 3~\cite{FH}.

 Recall that flows $\phi_1^t$ and $\phi_2^t$ are called {\it conjugate} if there exists a homeomorphism $h$ such that $h\circ \phi_1^t=\phi_2^t\circ h$ for all $t\in \R$. 

 In the setting of 3-dimensional contact Anosov flows, Feldman and Ornstein proved that any topological (merely $C^0$)  conjugacy is, in fact, $C^\infty$ smooth~\cite{FO}. To formulate our generalization recall that the distributions $E^s\oplus X$ and $E^u\oplus X$ are known to integrate to foliations $W^{0s}$ and $W^{0u}$, respectively, which are called weak stable and weak unstable foliations.
 
\begin{theorem}
\label{thm_flows}
Let $\phi_1^t\colon M_1\to M_1$ and $\phi_2^t\colon M_2\to M_2$ be contact Anosov flows, which are conjugate via a homeomorphism $h\colon M_1\to M_2$. Assume that the weak stable and unstable distributions of $\phi_1^t$ and $\phi_2^t$ are $C^{r}$ for some $r\ge 1$. Then $h$ is $C^{r_*}$.
\end{theorem}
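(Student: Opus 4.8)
The plan is to upgrade the regularity of the invariant structures, reduce the smoothness of $h$ to its smoothness along the strong stable and strong unstable leaves, and establish the latter by a renormalization (``matching function'') argument. \emph{Step 1 (regularity of the invariant bundles).} Since $\phi_i^t$ preserves the contact form $\alpha_i$ we have $\iota_{X_i}d\alpha_i=0$, the strong bundles $E^s_i,E^u_i$ are $d\alpha_i$-isotropic, and $\ker\alpha_i=E^s_i\oplus E^u_i$. As $\phi_i^t$ also preserves the volume $\alpha_i\wedge(d\alpha_i)^d$ it is transitive, so the invariant function $\alpha_i(X_i)$ is a nonzero constant; normalizing it to $1$ gives $E^s_i=(E^s_i\oplus X_i)\cap\ker\alpha_i$ and $E^u_i=(E^u_i\oplus X_i)\cap\ker\alpha_i$. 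As intersections of the $C^r$ weak bundles with the smooth bundle $\ker\alpha_i$, the strong bundles $E^s_i,E^u_i$ are $C^r$; by the $C^r$ Frobenius theorem the foliations $W^{s}_i,W^{u}_i,W^{0s}_i,W^{0u}_i$ are $C^r$, their individual leaves being $C^\infty$ since $\phi_i^t$ is $C^\infty$.

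\emph{Step 2 (reduction to leafwise smoothness).} The conjugacy $h$ is bi-Hölder and, being time-preserving, carries each of these foliations of $\phi_1^t$ to the corresponding one of $\phi_2^t$ and maps $\phi_1$-orbits to $\phi_2$-orbits preserving the time parameter; in particular $h$ restricted to any orbit is $C^\infty$. By Journé's lemma it then suffices to show that $h\colon W^u_1(x)\to W^u_2(h(x))$ and $h\colon W^s_1(x)\to W^s_2(h(x))$ are $C^{r_*}$, uniformly in $x$. Indeed, given this, $h$ is $C^{r_*}$ along the transverse $C^r$ foliations $W^s_1$ and the orbit foliation inside each (smooth) leaf of $W^{0s}_1$, hence $C^{r_*}$ along $W^{0s}_1$, and then $C^{r_*}$ along the complementary $C^r$ foliations $W^{0s}_1$ and $W^u_1$ of $M_1$, hence $C^{r_*}$.

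\emph{Step 3 (matching functions; the crux).} By the symmetry $t\mapsto -t$ it is enough to treat $W^u$. Fix $x$ and $\bar x=h(x)$ and work in $C^\infty$ intrinsic charts $\exp^u_{i,y}\colon E^u_{i,y}\to W^u_i(y)$, smooth in $y$ along leaves, so that $h$ becomes a family $\bar h_y\colon E^u_{1,y}\to E^u_{2,h(y)}$ with the cocycle-conjugacy relation $\bar h_{\phi_1^t y}\circ g^t_{1,y}=g^t_{2,h(y)}\circ\bar h_y$, where $g^t_{i,y}$ represents $\phi_i^t$ in the charts and $D_0 g^t_{i,y}$ is the unstable derivative cocycle. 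Pulling back along $t=-s<0$ exhibits $\bar h$ as the limit, under renormalization by the contracting maps $g^{-s}_{1,\cdot}$ and the expanding maps $g^{s}_{2,\cdot}$, of data governed by these cocycles; the $C^r$ transverse regularity of the weak unstable foliation --- equivalently, the bunching it encodes --- is precisely what makes this renormalization converge in the $C^{r_*}$ topology, so that the limiting ``matching function'', and hence $\bar h$, is $C^{r_*}$. Feeding this into Step 2 yields $h\in C^{r_*}$. Under the additional hypotheses (matching of the periodic Jacobian data along $E^u$, natural here because the invariant contact form, and hence that data, is canonical, and automatic in the quoted examples) the renormalization converges in every $C^k$ and a bootstrap gives $h\in C^\infty$.

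\emph{Main obstacle.} Step 3. In the $3$-dimensional Feldman--Ornstein case the strong leaves are $1$-dimensional and one may use monotonicity and a single density ratio; in general the leaves are $d$-dimensional, so one must run the renormalization with matrix-valued and higher-order data, identify exactly which bunching inequality (supplied by the $C^r$ hypothesis) makes it converge in $C^{r_*}$, and control the regularity loss that, absent the matching hypotheses, keeps $r_*$ below $2$.
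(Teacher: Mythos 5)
You correctly identify the outer shape of the argument (reduce to leafwise smoothness along $W^u$ and $W^s$, then assemble via Journ\'e), but Step~3 --- which you yourself flag as the crux --- contains a genuine gap, and the mechanism you propose for closing it is not the one that actually works.

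The decisive problem is the assertion that the $C^r$ transverse regularity of the weak unstable foliation, ``equivalently, the bunching it encodes,'' is ``precisely what makes this renormalization converge in the $C^{r_*}$ topology.'' Bunching alone does not force the conjugacy to be smooth along unstable leaves; if it did, the contact hypothesis would be entirely decorative, whereas in the paper it enters the argument in a sharp, quantitative way after all the regularity from bunching has already been exploited. What the matching-function/renormalization machinery actually produces in dimension $>3$ is the Subbundle Theorem (Theorem~\ref{thm_tech}): a $D\phi_i^t$-invariant $C^r$ subbundle $E_i\subset E_i^u$ with integral foliation $\cF_i\subset W^u_i$ such that $h$ restricted to unstable leaves is uniformly $C^r$ only \emph{transversely to} $\cF_i$, and such that $E_i^s\oplus E_i$ is jointly integrable. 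Nothing at this stage prevents $\dim E_i>0$, which is exactly the case in which $h$ would fail to be $C^r$ along $W^u_1$. The step that kills this possibility is Lemma~\ref{lemma_contact}: if $\cF\subset W^s$ (resp.\ $\subset W^u$) is a subfoliation that integrates jointly with the opposite strong foliation, then the non-vanishing of $\alpha\wedge(d\alpha)^d$ forces $\dim\cF=0$. This is where the contact structure carries the weight of the theorem; the facts you collect in Step~1 ($\ker\alpha_i=E_i^s\oplus E_i^u$, $C^r$ regularity of the strong bundles, etc.)\ are needed but are not the bottleneck.

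Your ``main obstacle'' remark --- that one must ``run the renormalization with matrix-valued and higher-order data'' and ``control the regularity loss'' --- is exactly where the Subbundle Theorem and the contact Lemma~\ref{lemma_contact} are required. Without them, Step~3 as stated has no proof; and the version you would want (convergence of the renormalization in $C^{r_*}$ from bunching alone) is not what is true. In the $3$-dimensional Feldman--Ornstein case the hidden subbundle is forced to be $\{0\}$ or all of $E^u$, which is why the dichotomy is invisible there; in higher dimensions the intermediate-rank possibilities are genuinely present and must be excluded, which is the content of Lemma~\ref{lemma_contact}.
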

Here $r_*=r$ if $r$ is not integer and $r_*=r-1+Lip$ if $r$ is an integer (if $r=1$ we can set $r_*=1$ as well). The latter means that $h$ is $C^{r-1}$ diffeomorphism with Lipschitz $(r-1)$-jet. Note that $M_1$ and $M_2$ are homeomorphic via $h$, but a priori may carry different smooth structures. We then conclude that they are, in fact, diffeomorphic once we know that $h$ is $C^1$.

We also recall the definition of a distribution $E\subset TM$ being $C^r$. This mean that $E$ is $C^r$ when viewed as a map from $M$ into the grassmann  bundle $\textup{Gr}^{\dim E}(M)$. Alternatively, $E$ s locally spanned by $\dim E$ independent $C^r$ vector fields on $M$.

The main setup  where this result applies is when the Anosov flows satisfy a {\it bunching condition}, which guarantees $C^{r}$ regularity of weak distributions. Denote by $m(A)=\|A^{-1}\|^{-1}$ the conorm of a linear operator $A$. If for some $t>0$ and all $x\in M_i$
$$
\|D\phi_i^t|_{E_i^s(x)}\|\cdot \|D\phi_i^t|_{E_i^u(x)}\|^{r}<m(D\phi_i^{t}|_{E_i^u(x)})
$$
then $E^{0s}_i$, the weak stable distribution of $\phi_i^t$ is $C^{1+\eps}$~\cite{Hass}. Similarly, if 
$$
\|D\phi_i^{t}|_{E_i^s(x)}\|<m(D\phi_i^{t}|_{E_i^u(x)})\cdot  m(D\phi_i^{t}|_{E_i^s(x)})^{r}
$$
then the weak unstable distribution $E^{0u}_i$ is also $C^{r}$. In general, these conditions are optimal for $C^{r}$ smoothness of weak distributions~\cite{Hass}.

These bunching conditions can be verified for some specific examples. In particular, a geodesic flow on $1/4$-pinched negatively curved Riemannian manifold satisfies the above conditions with $r=1$ and, hence, has $C^1$ weak stable and unstable distributions. The $a^2$-pinching condition means that the sectional curvature function $K$ is bounded above and below as follows:
$$
-c< K\le- a^2c
$$ 
where $c$ is a positive constant. Hence, Theorem~\ref{thm_flows} applies to geodesic flows on Riemannian manifolds which are $C^2$ close to a hyperbolic manifold. Also point-wise $1/2$-pinching implies that weak distributions are $C^1$~\cite{Hass2}.

Now we present some corollaries of our main result. Note that by taking the product of the above bunching inequalities we can see that they are never simultaneously satisfied if $r\ge 2$. Hence, in practical terms, Theorem~\ref{thm_flows} only yields a limited regularity of the conjugacy: somewhere between $C^1$ and $C^2$. However, we can remedy this under some additional assumptions. We need to introduce another condition which we call {\it conformal $r$-pinching}. An Anosov flow $\phi^t$ satisfies conformal $r$-pinching with $r\in(1,2]$ if for a sufficiently large $t$ and all $x\in M$
$$
 \|D\phi^t|_{E^u(x)}\|< m(D\phi^t|_{E^u(x)})^{r}\,\,\,\,\,\mbox{and}\,\,\,\,\,\,m(D\phi^t|_{E^s(x)})^{r}< \|D\phi^t|_{E^s(x)}\|
 $$

\begin{corollary}
\label{thm_flows2}
Let $\phi_1^t\colon M_1\to M_2$ and $\phi_2^t\colon M_1\to M_2$ be contact Anosov flows, which are conjugate via a homeomorphism $h\colon M_1\to M_2$. Assume that the weak stable and unstable distributions of $\phi_1^t$ and $\phi_2^t$ are $C^{r}$ for some $r> 1$. Also assume that  $\phi_1^t$ and $\phi_2^t$ are conformally $r$-pinched. 
Then $h$ is a $C^\infty$ diffeomorphism.
\end{corollary}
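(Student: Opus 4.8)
The strategy is to bootstrap the regularity supplied by Theorem~\ref{thm_flows}, using that conformal $r$-pinching with $r\in(1,2]$ forbids all nontrivial resonances among the Lyapunov exponents along $E^s$ and $E^u$. First, Theorem~\ref{thm_flows} applies and shows that $h$ is at least $C^1$; in particular $M_1$ and $M_2$ are diffeomorphic, $h$ restricts to a $C^1$ diffeomorphism between corresponding (immersed) stable and unstable leaves, and $Dh$ is a continuous bundle isomorphism with $Dh(E^s_1)=E^s_2$, $Dh(E^u_1)=E^u_2$, intertwining the derivative cocycles, $Dh\circ D\phi_1^t=D\phi_2^t\circ Dh$. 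Moreover the strong distributions are themselves $C^r$ — indeed $E^u_i=E^{0u}_i\cap\ker\alpha_i$, since the invariant contact form $\alpha_i$ annihilates $E^s_i\oplus E^u_i$ — so $W^s_i$ and $W^u_i$ are $C^r$ foliations with $C^\infty$ leaves.

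Next I would linearize each $\phi_i^t$ along its strong unstable leaves. Let $\lambda^-(x)\le\lambda^+(x)$ be the infimum and supremum of the logarithmic expansion rates of $D\phi_i^t$ on $E^u_i$. Since the inequality $\|D\phi_i^t|_{E^u_i(x)}\|<m(D\phi_i^t|_{E^u_i(x)})^r$ holds with a uniform gap for a fixed large $t$, iterating along orbits gives $\lambda^+(x)<r\,\lambda^-(x)\le 2\lambda^-(x)$, uniformly in $x$; hence no relation $\lambda_j=\sum_k n_k\lambda_k$ with $\sum_k n_k\ge 2$ can hold among the unstable exponents, its right-hand side being $\ge 2\lambda^->\lambda^+\ge\lambda_j$. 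By the non-stationary Sternberg linearization theorem (Guysinsky--Katok; see also Kalinin--Sadovskaya) there is therefore a family of $C^\infty$ diffeomorphisms $\cH^u_{i,x}\colon W^u_i(x)\to E^u_i(x)$ with $\cH^u_{i,x}(x)=0$, $D_x\cH^u_{i,x}=\mathrm{id}$, conjugating $\phi_i^t$ leaf-to-leaf to the linear cocycle $\{D\phi_i^t|_{E^u_i}\}$, depending on $x$ with at least the transverse regularity of $E^u_i$ and with uniform $C^\infty$ bounds along leaves. The other inequality in the definition of conformal $r$-pinching forces, in the same way, the ratio of the strongest to the weakest contraction rate on $E^s_i$ to be $<r\le 2$, so the analogous linearization $\cH^s_{i,x}$ exists along the strong stable leaves.

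In these coordinates the leafwise map $\bar h_x:=\cH^u_{2,h(x)}\circ h\circ(\cH^u_{1,x})^{-1}\colon E^u_1(x)\to E^u_2(h(x))$ intertwines the two linear cocycles, fixes $0$, is $C^1$ along the leaf (because $h$ is $C^1$), and has $D_0\bar h_x=Dh|_{E^u_1(x)}=:L_x$. Since $L_x$ also intertwines the linear cocycles, $g_x:=L_x^{-1}\circ\bar h_x$ is a $C^1$ self-conjugacy of the single linear cocycle $\{D\phi_1^t|_{E^u_1}\}$ tangent to the identity at $0$; uniqueness of the non-stationary linearization in the absence of resonances then forces $g_x=\mathrm{id}$, i.e.\ $\bar h_x=L_x$. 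Hence $h|_{W^u_1(x)}=(\cH^u_{2,h(x)})^{-1}\circ L_x\circ\cH^u_{1,x}$ is $C^\infty$, with uniform bounds; symmetrically $h$ is $C^\infty$ along $W^s_1$, and it is tautologically $C^\infty$ along orbits because $h\circ\phi_1^t=\phi_2^t\circ h$. A Journ\'e-type assembly argument — combining, inside each weak leaf, the orbit direction with the strong direction, and then combining the two transverse $C^r$ weak foliations $W^{0s}_1$ and $W^{0u}_1$ — upgrades $h$ from leafwise $C^\infty$ to $C^\infty$ on $M_1$.

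The main obstacle is the leafwise rigidity step, namely identifying $\bar h_x$ with the linear map $Dh|_{E^u_1(x)}$. This is exactly where $r\le 2$ enters: the strict inequality in the definition of conformal $r$-pinching is what excludes the borderline resonance $\lambda^+=2\lambda^-$, and without it uniqueness genuinely fails (for instance $(x,y)\mapsto(x,y+cx^2)$ is a nonlinear $C^\infty$ self-conjugacy of $\mathrm{diag}(e^t,e^{2t})$ tangent to the identity). A secondary technical point is bookkeeping the transverse regularity of the linearizing charts so that the hypotheses of Journ\'e's lemma are actually met, which is where one uses that the weak distributions are $C^r$ rather than merely $C^1$. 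Both points should yield to by-now standard arguments, so that the real content of the corollary is the reduction, via Theorem~\ref{thm_flows}, to a question about conjugacies of narrow-band expansions.
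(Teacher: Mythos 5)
Your proposal follows essentially the same route as the paper's own proof: apply Theorem~\ref{thm_flows}, linearize along the strong unstable (and stable) leaves via the Guysinsky--Katok/Kalinin--Sadovskaya normal forms, identify the leafwise conjugacy with the composition of the two charts and $Dh$, and finish with a Journ\'e assembly.

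The one place where your argument is genuinely imprecise, and where the paper's proof does real work, is the uniqueness step. You write that $g_x:=L_x^{-1}\circ\bar h_x$ is a ``$C^1$ self-conjugacy'' of the linear cocycle tangent to the identity and that ``uniqueness of the non-stationary linearization in the absence of resonances'' forces $g_x=\mathrm{id}$. That is not true at the $C^1$ level: already for a single linear contraction $A=\mathrm{diag}(a,b)$ with $0<b<a<1$ and no resonance, the map $g(x,y)=(x,\,y+c|x|^\beta)$ with $\beta=\log b/\log a>1$ is a $C^1$ self-conjugacy with $g(0)=0$, $Dg(0)=I$, and $g\ne\mathrm{id}$ whenever $\beta<2$. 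So absence of resonances by itself is not what kills $g_x$; what kills it is that $g_x$ is better than $C^1$ --- it is $C^{1+\kappa}$ for a $\kappa$ strictly exceeding the critical exponent determined by the spread of the Lyapunov exponents on $E^u$. This is exactly the content of the paper's Addendum~\ref{add_normal_forms}, and it is here that two hypotheses you only use loosely are indispensable: (i) Theorem~\ref{thm_flows} gives $h\in C^{r_*}$ with $r_*>1$, not merely $C^1$, so $D\bar h_x$ is $(r-1)$-H\"older at $x$ uniformly; and (ii) conformal $r$-pinching guarantees that the critical exponent $\kappa_x$ (in the paper's notation) is at most $r-1$, so the H\"older exponent coming from $h$ beats it. Without tracking this regularity the key identification $\bar h_x=L_x$ does not follow. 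The rest of your outline --- including the observation that the strong distributions inherit $C^r$ regularity via the contact form, and the role of $r\le 2$ in excluding the quadratic resonance --- matches the paper.
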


\begin{remark} In the above corollary one can replace the pinching assumption with an assumption about existence of a conformal periodic point. This is a periodic point $p=\phi_1^T(p)$ such that the linearized return map $D\phi_1^T\colon T_pM_1\to T_pM_1$ is conformal on $E^u_1(p)$ and $E^s_1(p)$. This modified statement can be proved with a different bootstrap argument recently used by the authors in~\cite{GRH3}. While more ad hoc, the assumption about existence of conformal periodic point does cover some flows to which the above corollary does not apply.
\end{remark}

\begin{corollary}
\label{cor2b}
Let $\phi_i^t:T^1N_i\to T^1N_i$ be geodesic flows on negatively curved manifolds $(N_i,g_i)$, $i=1,2$ which are $C^0$ conjugate. Assume that both metrics $g_1$ and $g_2$ are $1/2$-pinched. Then the conjugacy is $C^\infty$ smooth.
\end{corollary}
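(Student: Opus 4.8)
The plan is to derive Corollary~\ref{cor2b} from Corollary~\ref{thm_flows2}. Each $\phi_i^t$ is a contact Anosov flow --- negative curvature gives the Anosov property, and the geodesic flow preserves the canonical contact form on $T^1N_i$ --- so the whole task reduces to producing a single exponent $r\in(1,2]$ for which, for $i=1,2$, the weak stable and unstable distributions $E_i^{0s}$, $E_i^{0u}$ are $C^r$ and $\phi_i^t$ is conformally $r$-pinched. I would extract all of this from curvature pinching.

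First I would convert the $1/2$-pinching into sharp exponential control of $D\phi_i^t$. Writing the hypothesis as $-c_i<K_{g_i}\le -c_i/2$, compactness of $T^1N_i$ and continuity of the curvature give $b_i^2:=\max(-K_{g_i})<c_i$ and $a_i^2:=\min(-K_{g_i})\ge c_i/2$, hence
\[
\frac{b_i}{a_i}=\sqrt{\frac{\max(-K_{g_i})}{\min(-K_{g_i})}}<\sqrt2\qquad\text{strictly.}
\]
The Rauch/Riccati comparison for the shape operators of the (un)stable horospheres then yields, for a uniform $C\ge1$ and all $x$, $t\ge 0$,
\[
C^{-1}e^{a_it}\le m\big(D\phi_i^t|_{E^u(x)}\big)\le\|D\phi_i^t|_{E^u(x)}\|\le Ce^{b_it},
\]
together with the dual bounds $C^{-1}e^{-b_it}\le m(D\phi_i^t|_{E^s(x)})\le\|D\phi_i^t|_{E^s(x)}\|\le Ce^{-a_it}$ coming from the flip symmetry $v\mapsto-v$. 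Now set $s:=\max_i(b_i/a_i)<\sqrt2$ and fix any $r\in(s,\,2/s)$; since $s\ge1$ this forces $r>1$, and $r<2/s\le2$, so $r\in(1,2]$.

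Then it is just bookkeeping. Feeding these bounds into the bunching conditions of \cite{Hass} stated after Theorem~\ref{thm_flows}, both conditions --- for $C^r$ regularity of $E_i^{0s}$ and of $E_i^{0u}$ --- collapse, once the constants $C$ are absorbed for $t$ large, to $r b_i<2a_i$, i.e. $r<2a_i/b_i$, which holds by the choice of $r$; hence the weak distributions are $C^r$. The two inequalities defining conformal $r$-pinching collapse similarly to $b_i<ra_i$, i.e. $r>b_i/a_i$, which also holds. Both hypotheses of Corollary~\ref{thm_flows2} are thereby verified, and it gives that $h$ is a $C^\infty$ diffeomorphism.

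The only place real work is needed is the second step: one must check that the Rauch comparison loses nothing in the \emph{exponents} of the expansion and contraction rates --- this is exactly the content of the Riccati comparison for horospherical shape operators --- and one must observe that it is precisely $1/2$-pinching, with the strict curvature inequality made effective by compactness, that yields $b_i/a_i<\sqrt2$ and hence a nonempty admissible interval $(s,2/s)$ for $r$. Under mere $1/4$-pinching that interval degenerates to the empty set, which is why the conclusion cannot be obtained from Theorem~\ref{thm_flows} alone and the stronger pinching is genuinely used.
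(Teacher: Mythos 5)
Your proof is correct and follows essentially the same route as the paper: both verify that $1/2$-pinching yields, via Jacobi-field (Riccati) comparison, the exponential bounds needed to check the bunching condition and conformal $r$-pinching simultaneously, and then invoke Corollary~\ref{thm_flows2}. The only difference is cosmetic — the paper rescales the curvature to $(-2,-1]$ and takes $r=\sqrt2$ outright (relying on strictness of the comparison), while you keep $a_i,b_i$ explicit, use compactness to get $b_i/a_i<\sqrt2$ strictly, and pick $r$ in the resulting open interval $(s,2/s)$.
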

This, in particular, applies to geodesic flows of Riemannian metrics in a sufficiently small $C^2$-neighborhood of a hyperbolic metric: if two such metric have the same marked length spectrum (or, equivalently, are $C^0$ conjugate) then the conjugacy of geodesic flows is a $C^\infty$ diffeomorphism. 
\begin{corollary} 
\label{cor2}
Let $\phi^t$ be a geodesic flow on a negatively curved $1/2$-pinched  manifold. Then there exists a $C^1$-neighborhood $\cU$ of $\phi^t$ such that if $\phi_1^t, \phi_2^t\in \cU$ are contact and conjugate, then the conjugacy is $C^\infty$ smooth.
\end{corollary}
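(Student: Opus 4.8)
The plan is to deduce the statement from Corollary~\ref{thm_flows2}. It suffices to produce a $C^1$-neighborhood $\cU$ of $\phi^t$ and an exponent $r\in(1,2)$ such that every flow in $\cU$ has $C^r$ weak stable and unstable distributions and is conformally $r$-pinched; then any contact, conjugate pair $\phi_1^t,\phi_2^t\in\cU$ has a $C^\infty$ conjugacy by that corollary. The key point is that, for a suitably large fixed time $t_0$, all of the relevant inequalities hold strictly for $\phi^t$ itself, with a uniform margin over $T^1N$, and therefore persist under $C^1$-small perturbations of the flow.

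First I would record what the pinching hypothesis gives for $\phi^t$. Let $-b^2=\min_{T^1N}K$ and $-a^2=\max_{T^1N}K$, so $0<a\le b$; since $N$ is compact, $(N,g)$ being $1/2$-pinched is equivalent to the strict inequality $b^2<2a^2$, i.e.\ $b/a<\sqrt2$. By the Rauch comparison theorem there is $C\ge1$ with, for all $t>0$ and all $x\in T^1N$,
$$
\|D\phi^t|_{E^s}\|\le Ce^{-at},\quad \|D\phi^t|_{E^u}\|\le Ce^{bt},\quad m(D\phi^t|_{E^u})\ge C^{-1}e^{at},\quad m(D\phi^t|_{E^s})\ge C^{-1}e^{-bt}.
$$
Substituting these bounds into the two bunching conditions for $C^r$ weak distributions, one sees that both reduce to $rb<2a$; substituting them into the two conformal $r$-pinching inequalities, one sees that both reduce to $ra>b$ — in each case after fixing $r$ and then taking $t_0$ large enough that the exponential factor beats the power of $C$. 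Since $b^2<2a^2$, the interval $(b/a,2a/b)$ is nonempty, and since $b/a\ge1$ and $2a/b\le2$ any $r$ in it lies in $(1,2)$. Fix such an $r$; then for $t=t_0$ large enough $\phi^t$ satisfies all four inequalities strictly, and by compactness of $T^1N$ with a uniform margin.

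Finally I would run the openness argument. If $\psi^t$ is $C^1$-close to $\phi^t$ then $\psi^t$ is Anosov by structural stability, its splitting $E^s_\psi\oplus X_\psi\oplus E^u_\psi$ is $C^0$-close to that of $\phi^t$ (the invariant bundles depend continuously on the flow), and the time-$t_0$ map $\psi^{t_0}$ is $C^1$-close to $\phi^{t_0}$, hence $D\psi^{t_0}$ is $C^0$-close to $D\phi^{t_0}$; so uniformly in $x$ the quantities $\|D\psi^{t_0}|_{E^s_\psi}\|$, $m(D\psi^{t_0}|_{E^u_\psi})$, and the other two, are close to their counterparts for $\phi^t$, and the four strict inequalities survive. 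Choosing $\cU$ to be such a neighborhood, every $\psi^t\in\cU$ has $C^r$ weak stable and unstable distributions by the bunching criterion~\cite{Hass} and is conformally $r$-pinched, so Corollary~\ref{thm_flows2} applies to any contact, conjugate pair in $\cU$ and yields a $C^\infty$ conjugacy. The only genuinely delicate bookkeeping is the first step — using Rauch comparison to package $1/2$-pinching into the simultaneous requirements $b/a<r<2a/b$, so that a single $r>1$ serves both the bunching and the conformal-pinching hypotheses, and checking that the inequalities are strict at a finite time; I expect everything after that to be a routine continuity argument and not a real obstacle.
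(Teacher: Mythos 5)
Your proof is correct and follows essentially the same route as the paper. The paper also proves this by verifying the bunching and conformal-pinching hypotheses of Corollary~\ref{thm_flows2} for $1/2$-pinched geodesic flows via Jacobi-field comparison and then invoking $C^1$-openness of those conditions; the paper simply rescales the metric so that $K\in(-2,-1]$ and fixes $r=\sqrt 2$, whereas you keep the raw curvature bounds $-b^2\le K\le -a^2$ and observe that any $r\in(b/a,\,2a/b)\subset(1,2)$ works, and you spell out the openness step (structural stability, continuity of the splitting, and $C^1$-closeness of time-$t_0$ maps) rather than just asserting it. These are presentational differences, not a different argument.
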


We can also partially recover a geometric rigidity result of Hamendst\"adt~\cite{Ham}.

\begin{corollary}
\label{cor3}
If $M$ and $N$ are closed negatively curved manifolds with the same marked length spectrum and $C^1$ Anosov splittings, then $M$ and $N$ have the same volume.
\end{corollary}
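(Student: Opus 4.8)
The plan is to deduce the equal-volume conclusion from Theorem~\ref{thm_flows}. Since $M$ and $N$ have the same marked length spectrum, a standard argument (Otal, Croke) produces a time-preserving conjugacy $h$ between the geodesic flows $\phi_1^t\colon T^1M\to T^1M$ and $\phi_2^t\colon T^1N\to T^1N$: one lifts to universal covers, uses that closed geodesics are dense and that corresponding closed geodesics have equal lengths to build a H\"older homeomorphism of $T^1M$ onto $T^1N$ commuting with the flows, via the boundary map at infinity. So first I would invoke this to obtain a $C^0$ conjugacy $h$.

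Next, the hypothesis that $M$ and $N$ have $C^1$ Anosov splittings means precisely that the weak stable and weak unstable distributions of $\phi_1^t$ and $\phi_2^t$ are $C^1$, so Theorem~\ref{thm_flows} (with $r=1$, $r_*=1$) applies and $h$ is a $C^1$ diffeomorphism. In particular $T^1M$ and $T^1N$ are $C^1$-diffeomorphic via $h$ and $h$ conjugates the flows; since geodesic flows are contact (preserving the canonical contact forms $\alpha_1$, $\alpha_2$ built from the Liouville forms), we are in the setting of the theorem.

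The final step is to extract the volume identity. The Liouville volume form on $T^1M$ is (up to a universal constant) $\alpha_1\wedge(d\alpha_1)^d$, and its total mass equals, again up to a dimensional constant, $\mathrm{vol}(M)$ (Santal\'o's formula). The flow $\phi_1^t$ preserves $\alpha_1$, hence preserves this volume; similarly for $\phi_2^t$ and $\alpha_2$. Since $h$ is a $C^1$ conjugacy between two contact Anosov flows, $h^*\alpha_2$ is a flow-invariant contact form for $\phi_1^t$ cohomologous in the appropriate sense to $\alpha_1$; the key point is that a contact Anosov flow has a \emph{unique} invariant 1-form that evaluates to $1$ on the generator (the two candidates $\alpha_1$ and $h^*\alpha_2$ both kill $E^s_1\oplus E^u_1$ because $h$ maps weak foliations to weak foliations, and both give $1$ on $X_1$ after the time-preserving normalization), so $h^*\alpha_2=\alpha_1$. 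Pulling back the top form gives $h^*(\alpha_2\wedge(d\alpha_2)^d)=\alpha_1\wedge(d\alpha_1)^d$, and integrating over $T^1M$ using that $h$ is a $C^1$ diffeomorphism yields equality of Liouville volumes, hence $\mathrm{vol}(M)=\mathrm{vol}(N)$.

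The main obstacle is the middle identification $h^*\alpha_2=\alpha_1$: one must argue carefully that $h$, being only a conjugacy, sends $E^s_1\oplus X_1$ and $E^u_1\oplus X_1$ to their counterparts (this is automatic for any conjugacy between Anosov flows), that the pullback contact form is genuinely $C^1$ so that $d(h^*\alpha_2)=h^*(d\alpha_2)$ makes sense, and that the normalization $\alpha_i(X_i)=1$ together with vanishing on the weak-stable-plus-weak-unstable hyperplane forces uniqueness. Everything else is either the classical marked-length-spectrum-to-conjugacy construction or a bookkeeping computation with Santal\'o's formula; the regularity upgrade is handed to us by Theorem~\ref{thm_flows}.
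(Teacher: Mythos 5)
Your proof is correct and follows essentially the same route as the paper: obtain the $C^0$ conjugacy from equal marked length spectrum (the paper attributes this to orbit equivalence plus the Livshits theorem rather than the boundary-map construction, but these are the same standard argument), upgrade it to $C^1$ via Theorem~\ref{thm_flows}, show $h^*\alpha_2=\alpha_1$ by matching the value on the generator and the kernel $E^s\oplus E^u$, and pull back the contact volume form. The only cosmetic difference is that you invoke Santal\'o's formula explicitly to relate Liouville volume to Riemannian volume, whereas the paper leaves that identification implicit.
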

Our result is weaker than the result of Hamendst\"adt~\cite{Ham} because Hamendst\"adt only assumed that the Anosov splitting of  $TT^1M$ is $C^1$ and didn't have any assumption on the Ansov splitting of $TT^1N$. Still it is enough to recover marked length spectrum rigidity of hyperbolic manifolds using the Besson-Courtois-Gallot entropy rigidity theorem~\cite{BCG}. Hence, following Hamendst\"adt's application of entropy rigidity we arrive at a version of marked length spectrum rigidity for hyperbolic manifolds.

\begin{corollary}
Let $(M, g_1)$ be a closed real hyperbolic manifold of dimension $\ge3$ and let $g_2$ be a $1/4$-pinched Riemannian metric on $M$. Assume that $(M, g_1)$ and $(M, g_2)$ have the same marked length spectrum. Then $g_2$ is isometric to $g_1$.
\label{cor4}
\end{corollary}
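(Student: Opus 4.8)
The plan is to follow the route sketched just before the statement: use Corollary~\ref{cor3} to fix the volume, use the marked length spectrum to fix the volume entropy, and then feed both equalities into the Besson--Courtois--Gallot entropy rigidity theorem~\cite{BCG}, as in Hamenst\"adt's argument~\cite{Ham}. Write $n=\dim M\ge 3$. First I would check that Corollary~\ref{cor3} applies to $(M,g_1)$ and $(M,g_2)$: both metrics are negatively curved ($g_1$ is hyperbolic, and $1/4$-pinching forces the sectional curvature of $g_2$ to satisfy $-c<K\le -\tfrac14 c<0$), they share the marked length spectrum by hypothesis, and their Anosov splittings are $C^1$ --- for $g_1$ the splitting is even real-analytic, while for the $1/4$-pinched metric $g_2$ it is exactly the (borderline) bunching case recalled in the Introduction, where the strong and weak stable/unstable distributions of the geodesic flow are $C^1$. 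Corollary~\ref{cor3} then gives $\mathrm{vol}(M,g_1)=\mathrm{vol}(M,g_2)$.

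Next I would establish $h_{\mathrm{vol}}(g_1)=h_{\mathrm{vol}}(g_2)$. In negative curvature each nontrivial conjugacy class of $\pi_1(M)$ is represented by a closed geodesic whose length is its value in the marked length spectrum; hence the closed-geodesic counting function $N_i(T)=\#\{\text{closed }g_i\text{-geodesics of length}\le T\}$ is determined by the marked length spectrum, so $N_1\equiv N_2$. By Margulis' asymptotic formula for closed geodesics in negative curvature, $\lim_{T\to\infty}\tfrac1T\log N_i(T)$ equals the topological entropy of the geodesic flow $\phi^t$ of $g_i$, so these topological entropies coincide; and since each $g_i$ is nonpositively curved, Manning's theorem identifies the topological entropy of the geodesic flow with the volume entropy of the metric. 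Therefore $h_{\mathrm{vol}}(g_1)=h_{\mathrm{vol}}(g_2)$.

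Now I would invoke the Besson--Courtois--Gallot theorem~\cite{BCG} with target the closed real hyperbolic manifold $(M,g_1)$ of dimension $n\ge 3$: every Riemannian metric $g$ on $M$ satisfies $h_{\mathrm{vol}}(g)^n\,\mathrm{vol}(M,g)\ge h_{\mathrm{vol}}(g_1)^n\,\mathrm{vol}(M,g_1)$, with equality if and only if $g$ is homothetic to $g_1$. Taking $g=g_2$, the two preceding paragraphs show that equality holds, so $g_2$ is isometric to $\lambda g_1$ for some $\lambda>0$; comparing volumes (so $\lambda^{n/2}=1$), or equivalently volume entropies, forces $\lambda=1$, and hence $g_2$ is isometric to $g_1$.

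The one genuinely delicate point is the input to Corollary~\ref{cor3}, namely verifying that $1/4$-pinching of $g_2$ really does place its geodesic flow in the class with $C^1$ Anosov splitting --- this is the optimal, borderline bunching regime discussed in the Introduction, so it deserves a careful check rather than a mere citation. Everything downstream of Corollary~\ref{cor3} --- Margulis' counting asymptotics, Manning's entropy identity, and the BCG rigidity theorem --- is a routine assembly.
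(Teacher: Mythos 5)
Your proof is correct and follows essentially the same route as the paper: apply Corollary~\ref{cor3} to get equality of volumes, observe that volume entropies also agree, and then conclude by the BCG equality case. The one small deviation is how you establish $h_{\mathrm{vol}}(g_1)=h_{\mathrm{vol}}(g_2)$: you pass through the closed-geodesic counting function and Margulis' asymptotic formula, whereas the paper simply notes that the geodesic flows are $C^0$ conjugate (already established in the proof of Corollary~\ref{cor3}), so their topological entropies coincide as a conjugacy invariant, and then invokes Manning's identity; this is the more direct argument, though yours is equally valid. You also spell out the homothety-to-isometry normalization at the end, which the paper leaves implicit.
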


\subsection{Organization} 
In the next section we recall some facts about contact Anosov flows and about the matching function technique. Then we introduce the main technical tool which we call the Subbundle Theorem. In Section~3 we prove Theorem~\ref{thm_flows} and in Section~4 we derive all the corollaries.

We would like to thank the anonymous referee for a thorough and beautiful report.

\section{Preliminaries}

\subsection{Basic facts about contact Anosov flows}

Recall that we denote by $W^{s}$, $W^{u}$, $W^{0s}$ and $W^{0u}$ the stable, unstable, weak stable and weak unstable foliations of an Anosov flow. When needed, we will also use a subscript $i$ to indicate dependence on the flow $\phi_i^t$, $i=1,2$.

It is immediate from the definition of Anosov contact flow $\phi^t$ that $\alpha(X)$ is constant; hence, we can normalize the contact form so that $\alpha(X)=1$. Also we have $\ker\alpha=E^s\oplus E^u$. Indeed if $v\in E^s$ then $\alpha(v)=\alpha(D\phi^t(v))\to 0$ as $t\to\infty$, and similarly for $v\in E^u$. It is a simple exercise to check that if $\phi^t$ is a contact Anosov flow then $\dim E^s=\dim E^u=d$.

\begin{lemma}
\label{lemma_contact}
Let $\phi^t\colon M\to M$ be a contact Anosov flow with $C^1$  stable and unstable foliations. Assume that the stable foliation $W^s$ admits a $C^1$ subordinate foliation $\cF$, $\cF(x)\subset W^s(x)$, $x\in M$, which integrates jointly with $W^u$. Then $\cF$ is a foliation by points, that is, $\cF(x)=\{x\}$ for all $x\in M$.
\end{lemma}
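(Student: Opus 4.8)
The plan is to exploit the contact structure together with the joint integrability hypothesis. Suppose $\cF$ is nontrivial, so $\dim\cF = k \ge 1$ along $W^s$. By hypothesis $\cF$ integrates jointly with $W^u$; call the resulting $C^1$ foliation $\cG$, so that $T\cG = T\cF \oplus E^u$, a distribution of dimension $k+d$. The first step is to understand the geometry of $\cG$ relative to the contact form. Since $T\cF \subset E^s$ and $E^s \subset \ker\alpha$, and $E^u \subset \ker\alpha$, we have $T\cG \subset \ker\alpha$; in particular the flow direction $X$ is nowhere tangent to $\cG$ (recall $\alpha(X)=1$), and $\cG$ is a $C^1$ foliation transverse to $X$.

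Next I would bring in $d\alpha$. The restriction of $d\alpha$ to $\ker\alpha = E^s\oplus E^u$ is a nondegenerate $2$-form (this is the standard fact that a contact flow's weak-bundle complement is symplectic; it follows from $\alpha\wedge(d\alpha)^d \ne 0$ and $X \lrcorner\, d\alpha = 0$, which in turn comes from $X\alpha = \cL_X\alpha = 0$ and $\alpha(X)=1$ via Cartan's formula). Moreover both $E^s$ and $E^u$ are Lagrangian for $d\alpha|_{\ker\alpha}$: for $v,w \in E^s$ one has $d\alpha(v,w) = d\alpha(D\phi^t v, D\phi^t w) \to 0$ as $t\to+\infty$ by invariance of $d\alpha$ and contraction on $E^s$, so $d\alpha(v,w)=0$, and symmetrically on $E^u$. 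Since $\cF$ is an \emph{integrable} $C^1$ subbundle of $E^s$, $d\alpha|_{T\cF} = 0$ automatically (it is contained in the Lagrangian $E^s$), but the key point is that $\cG$ itself is integrable: for the $C^1$ distribution $T\cG = T\cF\oplus E^u$ to be integrable, $d\alpha$ restricted to $T\cG$ must vanish on brackets of local sections, and using that $\cG$ is a genuine foliation one gets that $\alpha$ is closed along the leaves of $\cG$, hence (being also nonzero transverse structure...) — more directly: $d\alpha$ vanishes on $T\cG\times T\cG$ because $\cG$ is tangent to $\ker\alpha$ and integrable, so $\cG$-leaves are isotropic for $d\alpha$. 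But $T\cG = T\cF\oplus E^u$ has dimension $k+d$, and an isotropic subspace of the $2d$-dimensional symplectic space $(\ker\alpha, d\alpha|_{\ker\alpha})$ has dimension at most $d$. Therefore $k+d \le d$, forcing $k = 0$, i.e. $\cF(x) = \{x\}$.

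The step I expect to be the main obstacle is the claim that the leaves of $\cG$ are isotropic for $d\alpha$ — equivalently, that $d\alpha$ annihilates $T\cG$. One inclusion ($E^u$ Lagrangian, $E^s\supset T\cF$ Lagrangian) is immediate, so the only genuinely missing piece is the mixed term: $d\alpha(v,w) = 0$ for $v \in T\cF$, $w \in E^u$. This does \emph{not} follow from the individual Lagrangian conditions and is precisely where joint integrability must be used. The argument I would carry out: take $C^1$ local sections $V$ of $T\cF$ and $W$ of $E^u$; since $T\cG$ is integrable, $[V,W]$ is again a section of $T\cG = T\cF \oplus E^u \subset \ker\alpha$, so $\alpha([V,W]) = 0$; combined with $\alpha(V) = \alpha(W) = 0$ (both in $\ker\alpha$) and the invariant formula $d\alpha(V,W) = V(\alpha(W)) - W(\alpha(V)) - \alpha([V,W])$, we get $d\alpha(V,W) = 0$. (This requires $\cF$ and hence the sections to be $C^1$, which is in the hypotheses, so that brackets make sense.) Once this is in hand, $T\cG$ is isotropic in $(\ker\alpha, d\alpha)$, and the dimension count closes the argument.

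A remark on rigor of the symplectic facts: I would state as a preliminary (or cite the standard contact-Anosov literature) that $\ker\alpha$ is symplectic with $E^s, E^u$ transverse Lagrangians, rather than reprove it; the novel content is only the use of joint integrability to make $T\cF\oplus E^u$ isotropic.
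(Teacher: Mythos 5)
Your proof is correct and takes essentially the same approach as the paper: the core step in both is to use the bracket formula $d\alpha(V,W)=V\alpha(W)-W\alpha(V)-\alpha([V,W])$ together with the joint integrability of $T\cF\oplus E^u$ inside $\ker\alpha$ to show $d\alpha$ annihilates $T\cF\oplus E^u$, and then to contradict the contact condition. The only cosmetic differences are that the paper derives the vanishing of $d\alpha$ on $E^s\times E^s$ and $E^u\times E^u$ from integrability rather than from flow-invariance plus contraction, and that the paper exhibits the contradiction by a direct combinatorial expansion of $\alpha\wedge(d\alpha)^d$ on a frame, whereas you package the same fact as the standard bound $\dim(\text{isotropic})\le d$ in the $2d$-dimensional symplectic space $(\ker\alpha,\,d\alpha|_{\ker\alpha})$.
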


\begin{proof}
We prove the contrapositive implication. The argument is local. Assume that $\dim\cF=m>0$. In a small neighborhood we can pick $2d$ vector fields $Y_1^s, Y_2^s,\ldots Y_d^s, Y_1^u, \ldots Y_d^u$ which are $C^1$ regular such that
$$
E^s=span \{Y_1^s, Y_2^s,\ldots Y_d^s\},\,\,\, E^u=span \{Y_1^u, Y_2^u,\ldots Y_d^u\}
$$
and 
$$
T\cF=span \{Y_1^s, Y_2^s,\ldots Y_m^s\},
$$

We will repeatedly use two basic facts about the Lie bracket. First, the bracket is, in fact, a first order differential operator and, hence, is defined for $C^1$ vector fields. The second one is this: if two vector fields are tangent to a foliation then their bracket is also tangent to this foliation (easy direction of the Frobenius theorem).

Because $E^s$ is integrable we have $[Y_i^s, Y_j^s]\in E^s\subset \ker\alpha$. Hence
$$
d\alpha(Y_i^s, Y_j^s)=Y_i^s\alpha(Y_j^s)-Y_j^s\alpha(Y_i^s)-\alpha([Y_i^s, Y_j^s])=0\footnote{Alternatively one can use invariance of $d\alpha$ and $E^s$, and the fact that vectors in $E^s$ contract to arrive at the same conclusion without explicitly using the integrability property.}
$$
Similarly $d\alpha(Y_i^u, Y_j^u)=0$. And by the same token, because $\cF$ integrates jointly with $W^u$ we have $[Y_i^s, Y_j^u]\in T\cF\oplus E^u \subset \ker\alpha$ when $i\le m$ and, hence $d\alpha(Y_i^s, Y_j^u)=0$ when $i\le m$.

We can now calculate $\alpha\wedge (d\alpha)^d(X, Y_1^s, Y_2^s,\ldots Y_d^s, Y_1^u, \ldots Y_d^u)$ using the permutation formula for the wedge product. Recall that if $\omega$ is a $k$-form and $\eta$ is an $l$-form then
$$
(\omega\wedge\eta)(Z_1, Z_2,\ldots Z_{k+l})=\sum_{\sigma\in S_{k+l}}sign(\sigma) \omega(Z_{\sigma(1)}, \ldots, Z_{\sigma(k)})\eta(Z_{\sigma(k+1)},\ldots , Z_{\sigma(k+l)})
$$ 
First, applying this formula for $\omega=\alpha$ and $\eta=(d\alpha)^d$ and using the fact that $Y^{s/u}_i\in\ker\alpha$ we have
\begin{multline*}
\alpha\wedge (d\alpha)^d(X, Y_1^s,\ldots ,Y_d^s, Y_1^u, \ldots ,Y_d^u)=\alpha(X) (d\alpha)^d(Y_1^s,\ldots ,Y_d^s, Y_1^u, \ldots ,Y_d^u)\\
=(d\alpha)^d(Y_1^s,\ldots ,Y_d^s, Y_1^u, \ldots ,Y_d^u)
\end{multline*}
Then to calculate this value we can inductively apply the wedge product formula until we express $(d\alpha)^d(Y_1^s,\ldots ,Y_d^s, Y_1^u, \ldots ,Y_d^u)$ as the sum over all permutations of $d$-fold products of values of $d\alpha$. Note that by the above observations many of these values vanish. Indeed, the only non-vanishing values have the form $d\alpha(Y_i^s, Y_j^u)$ for $i> m$. Since for each permutation the corresponding product can have at most $d-m$ such non-vanishing factors, it has at least $m$ zero factors and, hence, we obtain that $\alpha\wedge (d\alpha)^d(X, Y_1^s,\ldots ,Y_d^s, Y_1^u, \ldots ,Y_d^u)=0$, contradicting the contact property of $\alpha$.
\end{proof}

\subsection{Matching functions and the Subbundle Theorem}

We first recall the matching function technique which we have first introduced in~\cite{GRH} and further developed in~\cite{GRH3, GRH4}. Then we explain the statement of the Subbundle Theorem which was proved in~\cite{GRH3, GRH4}.

Let $\phi_i^t\colon M_i\to M_i$, $i=1,2$ Anosov flows with $C^{r}$ weak stable and unstable foliations, $r\ge 1$. Assume that they are conjugate, $h\circ \phi_1^t=\phi_2^t\circ h$. 
We proceed to explain a certain construction of sub-bundles $E_i$ of the unstable bundles $E_i^u$ via locally matching functions on the local unstable leaves. (Of course, the same construction can be applied on local stable leaves yielding sub-bundles of the stable bundle.)

Recall that the conjugacy $h$ maps leaves of $W_1^u$ to leaves of $W_2^u$. For each $x\in M_1$ consider pairs of $C^{r}$, $r\ge 1$, functions $(\rho^1,\rho^2)$ where $\rho^1$ is defined on an open neighborhood of $x$ in $W^u_1(x)$, $\rho^2$ is defined on an open neighborhood of $h(x)$ in $W^u_2(h(x))$ and such that
$$
\rho^1=\rho^2\circ h.
$$
This relation is what we call a {\it matching relation.} We collect all such pairs of functions into a space $V_x^{r}$
$$
V_x^{r}=\{(\rho^1,\rho^2): \rho^1=\rho^2\circ h\}.
$$
The domains of definition of $\rho^1$ and $\rho^2$ can be arbitrarily small open sets. Also denote by $V_{x,1}^{r}$ the collection of all possible $\rho^1$, that is, projection of $V_x^{r}$ on the first coordinate, and by $V_{x,2}^{r}$ the projection on the second coordinate.

Now we can define linear subspaces $E_i(x)\subset E^u_i(x)$ by intersecting the kernels of all $D\rho^i$ at $x$, $i=1,2$. Namely,
$$
E_i(x)=\bigcap_{ \rho^i\in V_{x,i}^{r}} \ker D\rho^i(x).
$$
We note that subbundles $E_i$ also depend on $r$, which can be taken to be any number $\ge 1$. However in this paper we will only use for specific $r$ given by regularity of invariant distributions.

It turns out that all subspaces $E_i(x)$, $x\in M_i$, $i=1,2$, have the same dimension and give an integrable sub-bundle with certain pleasant properties. Namely, we have the following Subbundle Theorem which was established in~\cite{GRH4}, (and before that for Anosov diffeomorphisms~\cite[Theorem 4.1]{GRH3}).

\begin{theorem}[Subbundle Theorem]
\label{thm_tech}
Let $\phi_i^t\colon M_i\to M_i$, $i=1,2$, be conjugate Anosov flows, $h\circ \phi_1^t=\phi_2^t\circ h$. Assume that both flows have $C^{r}$ stable foliations.
Then there exist $C^{r}$ regular, $D\phi_i^t$-invariant distributions $E_i\subset E_i^u$, such that
\begin{enumerate}
\item the distributions $E_i$ integrate to $\phi_i^t$-invariant foliations $\cF_i\subset W_i^u$;
\item the distributions $E^s_i\oplus E_i$ integrate to an $\phi_i^t$-invariant $C^{r}$ foliation 
which is sub-foliated by both $W^s_i$ and $\cF_i$;
\item the conjugacy $h$ maps $\cF_1$ to $\cF_2$;
\item the restrictions of $h$ to the unstable leaves are uniformly $C^{r}$ transversely to
$\cF_1$;
\item if $(\rho^1,\rho^2)\in V_x^{r}$ is a matching pair then $\rho^i$ is constant on connected local leaves of $\cF_i$.
\end{enumerate}
\end{theorem}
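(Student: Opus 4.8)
The plan is to extract the soft equivariance properties of the subbundles $E_i$ directly from the matching-function construction, then to prove their $C^{r}$ regularity by a contraction argument along unstable leaves using the $C^{r}$ stable holonomies, and finally to read off items (1)--(5). For the equivariance, observe that if $(\rho^1,\rho^2)\in V_x^{r}$ then $(\rho^1\circ\phi_1^{-t},\rho^2\circ\phi_2^{-t})\in V_{\phi_1^t(x)}^{r}$, because $h\circ\phi_1^{-t}=\phi_2^{-t}\circ h$; precomposition with $\phi_i^{-t}$ is a bijection $V_x^{r}\to V_{\phi_1^t(x)}^{r}$, so by the chain rule and intersection of kernels $D\phi_i^t E_i(x)=E_i(\phi_i^t(x))$. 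Moreover a $C^{r}$ function $\rho^2$ near $h(x)$ on $W_2^u(h(x))$ lies in $V_{x,2}^{r}$ exactly when $\rho^2\circ h$ is $C^{r}$ near $x$, so $V_{x,2}^{r}$ is determined by the germ of $h$ at $x$; one may therefore define on each local unstable leaf the partition $\cF_i$ whose plaque through $y$ is the common level set of all functions in $V_{y,i}^{r}$. The relation $\rho^1=\rho^2\circ h$ then immediately makes $h$ carry $\cF_1$-plaques onto $\cF_2$-plaques, which will be item (3), and makes every $\rho^i$ from a matching pair constant on $\cF_i$-plaques, which will be item (5) once $\cF_i$ is known to be a foliation with $T\cF_i=E_i$.

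The crux is the $C^{r}$ regularity of $E_i$, and it is here that the $C^{r}$ stable foliation enters, through $C^{r}$ stable holonomies. Fix $x_0$ and choose matching functions $\rho^1_1,\dots,\rho^1_k$ whose differentials at $x_0$ span the annihilator of $E_1(x_0)$; their differentials remain independent on a neighborhood of $x_0$ in $W_1^u(x_0)$, so there $\pi_1=(\rho^1_1,\dots,\rho^1_k)$ is a $C^{r}$ submersion, $\pi_1=\pi_2\circ h$ for $\pi_2=(\rho^2_1,\dots,\rho^2_k)$, and the fibers of $\pi_1$ contain the $\cF_1$-plaques --- with equality exactly when no further matching function cuts a fiber down, \ie when $\dim E_1$ is locally constant. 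To establish that the rank is locally constant, that $E_1=\ker D\pi_1$, and that this normal form varies $C^{r}$-smoothly from one unstable leaf to the next, I would set up a contraction: the $C^{r}$ stable holonomy identifies local unstable leaves through stably related points and, because $h$ intertwines the stable holonomies of $\phi_1^t$ and $\phi_2^t$, conjugates their matching structures, while hyperbolicity of $\phi_i^t$ turns the induced operator on the $C^{r}$ jets of the matching data along unstable leaves into a contraction; its unique fixed point is a $C^{r}$, constant-rank, $\phi_i^t$-invariant distribution $E_i\subset E_i^u$, the rank being the same everywhere by transitivity of the flow.

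Granting the regularity, $E_i$ is a $C^{r}$ distribution locally equal to $\ker D\pi_i$, hence integrable with integral manifolds the fibers of $\pi_i$; these glue into the $\phi_i^t$-invariant $C^{r}$ foliation $\cF_i\subset W_i^u$ of item (1), and reading $\pi_1=\pi_2\circ h$ in the transversal coordinates $\pi_i$ gives item (4), with uniformity coming from compactness and from the uniformity of the construction. For item (2), pulling a matching function constant on $\cF_i$-plaques back along the $C^{r}$ stable holonomy again produces a matching function constant on $\cF_i$-plaques, so the stable holonomy maps $\cF_i$-plaques to $\cF_i$-plaques; sliding a fixed $\cF_i$-plaque along the stable leaves therefore sweeps out an integral manifold of $E_i^s\oplus E_i$, and since the holonomy and the leaves are $C^{r}$ these fit together into a $C^{r}$ foliation, sub-foliated by $W_i^s$ and $\cF_i$ by construction. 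Items (3) and (5) were already obtained above.

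The main obstacle is this regularity step: upgrading the a priori merely H\"older conjugacy to a $C^{r}$ object transversely to $\cF_i$ while simultaneously pinning down the rank. Everything rests on running the jet fixed-point scheme along the expanding unstable leaves and transporting the matching data between nearby leaves via the stable holonomy --- which is exactly why $C^{r}$ regularity of the stable foliation is imposed as a hypothesis.
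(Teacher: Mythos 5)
First, note that the paper itself does not prove the Subbundle Theorem: it is quoted as established in \cite{GRH4} (and earlier for diffeomorphisms in \cite[Theorem 4.1]{GRH3}), so there is no in-paper proof to compare against. Your proposal must therefore be judged on its own.

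The soft parts of your sketch are correct and in the spirit of the matching-function technique: precomposition with $\phi_i^{-t}$ gives a bijection $V_x^{r}\to V_{\phi_1^t(x)}^{r}$ and hence $D\phi_i^tE_i(x)=E_i(\phi_i^t(x))$; likewise pullback along the local stable holonomy $\mathrm{hol}^s_{1,xy}$ bijects $V_x^{r}$ onto $V_y^{r}$ (since $h$ intertwines the two stable holonomies), and $C^{r}$ regularity of that holonomy then yields $D\mathrm{hol}^s_{1,xy}\,E_1(x)=E_1(y)$. This gives flow-invariance, holonomy-invariance, item (3), and the easy half of item (5), as you say.

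The genuine gap is exactly where you place the crux: local constancy of $\dim E_i$, integrability, and $C^{r}$ regularity. Your summary (``set up a contraction \ldots on the $C^{r}$ jets of the matching data \ldots unique fixed point is a $C^{r}$, constant-rank distribution \ldots the rank the same everywhere by transitivity'') does not actually close this. It is not specified what operator contracts on what Banach space of jets, nor why a fixed point of such an operator would coincide with the distribution defined pointwise by $\bigcap_\rho\ker D\rho$ rather than some other flow-invariant plane field. Second, ``constant by transitivity'' is not a one-liner: from a spanning finite subfamily of differentials one only gets \emph{upper} semicontinuity of $\dim E_1$, and upper semicontinuity plus flow-invariance plus a dense orbit only bounds $\dim E_1$ from below by its value on the dense orbit; to force equality everywhere one must combine this with the stable-holonomy invariance and the local product structure, and that argument needs to be written out. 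Third, granting constant rank and $E_i=\ker D\pi_i$ locally, $D\pi_i$ is a priori only $C^{r-1}$ along the unstable plaques, so the asserted $C^{r}$ regularity of $E_i$ does not fall out of the submersion picture; it requires the $C^{r}$ stable holonomy and the structure of $E_i^s\oplus E_i$ to enter in a more quantitative way than you indicate. Until these three points are made precise, the strategy does not establish items (1), (2) and (4).
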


\begin{remark} In~\cite{GRH4} we have defined the matching functions on local weak unstable manifolds instead of local unstable manifolds. We observe that any matching pair on local weak unstable manifolds can be restricted to local unstable manifolds and any matching pair on local unstable manifolds can be pulled back to a matching pair on weak unstable manifolds using local projection along the flow. Also, as explained right after the statement of~\cite[Theorem~2.1]{GRH4}, the subbundles defined through the matching functions on weak unstable manifolds are contained in unstable bundles. Hence, both of these definitions yield the same subbundles $E_i$.
\end{remark}


\subsection{Non-stationary linearization for expanding foliations} Let $\phi^t\colon M\to M$ be a smooth flow which leaves invariant a continuous foliation $W^u$ with uniformly smooth leaves. Assume that $W^u$ is an {\it expanding foliation}, that is, for a sufficiently large $t$
$\|D\phi^t(v)\|>\|v\|$, for all non-zero $v\in E^u$, where $E^u=TW^u$ is the distribution tangent to $W^u$. The following proposition on non-stationary linearization is a special case of the normal form theory developed by Guysinsky and Katok~\cite{GK} and further refined by Kalinin and Sadovskaya~\cite{KS2, K}. We will denote by $D^u$ the restriction of the differential to $E^u$.
\begin{proposition}
\label{prop_normal_forms}
	Let $r\in(1,2]$ and let $\phi^t$, $W^u$ and $E^u$ be as above. Assume that there exist a sufficiently large $t$ such that
	$$
 \|D^u\phi^t(x)\|< m(D^u\phi^t(x))^{r}, x\in M.
 $$
	Then for all $x\in M$ there exists $\cH_x:E^u(x)\to W^u(x)$ such that
	\begin{enumerate}
		\item $\cH_x$ is a smooth diffeomorphism for all $x\in M$;
		\item $\cH_x(0)=x$;
		\item $D_0\cH_x=id$;
		\item $\cH_{\phi^tx}\circ D_x\phi^t=\phi^t\circ \cH_x$ for all $t$;
		\item $D\cH_x$ has $(r-1)$-H\"older dependence along $W^u$;
		\item\label{aff} if $y\in W^u(x)$ then $\cH_y^{-1}\circ \cH_x:E^u(x)\to E^u(y)$ is affine.
 	\end{enumerate}
\end{proposition}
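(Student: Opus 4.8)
The plan is to deduce the proposition from the non-stationary normal form theory of Guysinsky--Katok~\cite{GK} and its refinements by Kalinin--Sadovskaya~\cite{KS2,K}; only two points genuinely need attention, namely that the band hypothesis forces the normal form along $W^u$ to be \emph{linear}, and that a normal form constructed for a single time-$t_0$ map is automatically equivariant for the whole flow. First I would fix $t_0>0$ large enough that $\|D^u\phi^{t_0}(x)\|<m(D^u\phi^{t_0}(x))^{r}$ for every $x$, put $f\defin\phi^{t_0}$ and $A_x\defin D^u_xf\colon E^u(x)\to E^u(fx)$, and use the uniformly smooth structure of the leaves to express $f$, in exponential charts along $W^u$, as a family of germs $F_x\colon(E^u(x),0)\to(E^u(fx),0)$ with $D_0F_x=A_x$. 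The key structural observation is that the (strict, uniform) band inequality $\|A_x\|<m(A_x)^{r}$ passes to all iterates and hence, writing the Lyapunov exponents of $E^u$ in an interval $[\chi_{\min},\chi_{\max}]$, gives $\chi_{\max}\le r\chi_{\min}\le 2\chi_{\min}$; therefore any resonance $\chi_i=\sum_j n_j\chi_j$ with $\sum_j n_j\ge 2$ would force $\chi_i\ge 2\chi_{\min}\ge\chi_{\max}$, which is impossible. Thus the Guysinsky--Katok normal form along $W^u$ has no nonlinear terms.

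I would then construct $\cH_x$ as the linearizing chart. Since $f$ expands along $W^u$ its backward iterates contract, and the standard recipe is
$$
\cH_x\defin\lim_{n\to\infty}(F^{(n)}_x)^{-1}\circ A^{(n)}_x,\qquad
F^{(n)}_x\defin F_{f^{n-1}x}\circ\cdots\circ F_x,\quad
A^{(n)}_x\defin A_{f^{n-1}x}\cdots A_x.
$$
The convergence of this sequence on a fixed ball about $0\in E^u(x)$ is the analytic heart of the matter: the $n$-th correction term is dominated by a product of quantities comparable to $\|A_{f^ix}\|^{r}/m(A_{f^ix})$, and the inequality $\|A\|<m(A)^{r}$ makes the resulting series geometric, so the limit exists; because the normal form is resonance-free, $\cH_x$ is a genuine smooth diffeomorphism with $\cH_x(0)=x$ and $D_0\cH_x=\mathrm{id}$, giving (1)--(3). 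The telescoping identities $(F^{(n+1)}_x)^{-1}=F_x^{-1}\circ(F^{(n)}_{fx})^{-1}$ and $A^{(n)}_{fx}A_x=A^{(n+1)}_x$ yield $\cH_{fx}\circ A_x=F_x\circ\cH_x$, which is (4) for $t=t_0$.

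Finally I would promote this to the flow and extract the remaining items. For arbitrary $s\in\R$ the diffeomorphism $\phi^s$ commutes with $f$, so $D^u\phi^s$ commutes with the cocycle $A$ and $\phi^s\circ\cH_x\circ(D^u_x\phi^s)^{-1}$ is again an $f$-equivariant linearizing chart at $\phi^s x$ with derivative $\mathrm{id}$ at the origin; by uniqueness of resonance-free normal forms with prescribed $1$-jet~\cite{GK,KS2} it must equal $\cH_{\phi^s x}$, which is exactly (4). Item (5) holds because the above convergence is uniform with $(r-1)$-H\"older control along $W^u$, as in~\cite{KS2,K}. For (6), if $y\in W^u(x)$ then $W^u(y)=W^u(x)$, so $\cH_y^{-1}\circ\cH_x$ is defined and intertwines the two resonance-free linearizations along the common leaf; normal-form uniqueness then forces it to be affine (it lies in the affine normalizer of the linear cocycle, affine rather than linear precisely because $\cH_x(0)=x\neq y$). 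I expect the main obstacle to be the convergence estimate in the second step with the correct exponents, together with the bootstrap from finite regularity to $C^\infty$; but this is carried out in detail in~\cite{GK,KS2,K}, so in the write-up I would record only the resonance computation and the flow-equivariance reduction and cite the rest.
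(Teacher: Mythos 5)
The paper itself gives no proof of this proposition, deferring entirely to the normal form theory of Guysinsky--Katok and Kalinin--Sadovskaya, and your sketch is a faithful reconstruction of the standard argument there: the conformal pinching kills all resonances of order $\ge 2$, the chart is built as the limit of inverse germs composed with their linearizations, and uniqueness of resonance-free normal forms with prescribed $1$-jet yields both the full flow-equivariance (not just for the fixed time $t_0$) and the affineness of transition maps. One small point to tighten: the resonance computation needs the strict inequality $\chi_{\max} < r\chi_{\min}$ rather than the non-strict $\le$ you wrote (otherwise the contradiction at $\sum n_j\ge 2$ does not close), but the uniform strict pointwise pinching on compact $M$ does supply exactly this.
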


Such family $\{\cH_x, x\in M\}$ is called {\it non-stationary linearization} (also called {\it normal form} or {\it affine structure}) along $W^u$.

It is well-known that non-stationary linearization is unique in appropriate class of linearization. We had difficulty finding a reference for the uniqueness statement which we need. Hence, we provide a precise uniqueness addendum with a proof. We formulate a somewhat more general point-wise uniqueness statement than what we need for the sake of optimality and ease of future reference.

Given a point $x$, let $\kappa_x$ to be the infimum of all $\nu$ so that 
$$
\liminf_{t\to-\infty}\frac{\|D^u_x\phi_t\|^{1+\nu}}{m(D^u_x\phi_t)}=0
$$
Note that the conformal pinching assumption of Proposition~\ref{prop_normal_forms} implies that $\kappa_x\le r-1$.

\begin{add}
\label{add_normal_forms}
Given $x\in M$ assume that $\bar \cH_{\phi^tx}:E^u(\phi^tx)\to W^u(\phi^tx)$, $t\leq 0$, is a family of $C^{1}$ diffeomorphisms satisfying items 2,3,4 (where 4 will only be used for $t\leq 0$) and assume that there exists $\kappa>\kappa_x$ such that $$\sup_{t\leq 0,|z|\leq 1}\frac{\|D_z\bar \cH_{\phi^tx}-Id\|}{|z|^\kappa}<\infty,$$ then $\bar \cH_x=\cH_x$.
\end{add}
\begin{remark}
In the case $\kappa_x$ is a minimum instead of an infimum, \ie the infimum is achieved, we can also take $\kappa=\kappa_x$ in the above addendum. 
\end{remark}

\begin{proof}
	Let $H'_t=\cH_{\phi^tx}^{-1}\circ \bar \cH_{\phi^tx}:E^u(\phi^tx)\to E^u(\phi^tx)$ and observe that $H'_t$ is $C^1$, $H'_t(0)=0$, $D_0H'_t=Id$. 
	Using the main assumption of the addendum, uniform regularity of $\cH_x^{-1}$ and applying the triangle inequality we can easily verify
	$$\sup_{t\leq 0,|z|\leq 1}\frac{\|D_z H'_{t}-Id\|}{|z|^\kappa}<\infty.$$ 
	Also we have the following relation
	\begin{eqnarray*}\label{conjpoint}
		H'_0=(D^u_x\phi^t)^{-1}\circ H'_t\circ D^u_x\phi^t,
	\end{eqnarray*} 
 which is easy to differentiate since two maps are linear and we obtain
 $$D_zH_0'=(D^u_x\phi^t)^{-1}\circ D_{D^u_x\phi^t(z)}H'_t\circ D^u_x\phi^t.
 $$
 Hence 
 
 \begin{eqnarray*}
		\|D_zH_0'-Id\|&=&\|(D^u_x\phi^t)^{-1}\circ D_{D_x\phi^t(z)}H'_t\circ D^u_x\phi^t-Id\|\\
		&\leq&\|(D^u_x\phi^t)^{-1}\|\|D_{D_x\phi^t(z)}H'_t-Id\|\|D^u_x\phi^t\|
	\end{eqnarray*}
	For $t<0$, $D^u_x\phi^t$ is a contraction, consider $t$ so that $|D^u_x\phi^t(z)|\leq 1$, then we get that 
	\begin{eqnarray*}
		\|D_zH_0'-Id\|&\leq&\|(D^u_x\phi^t)^{-1}\|\|D_{D^u_x\phi^t(z)}H'_t-Id\|\|D^u_x\phi^t\|\\
		& \leq&C\|(D^u_x\phi^t)^{-1}\|\|D_x\phi^t(z)\|^{\kappa}\|D_x\phi^t\|\\
		&\leq&\|(D^u_x\phi^t)^{-1}\|\|D^u_x\phi^t\|^{1+\kappa}\||z|^{\kappa}
	\end{eqnarray*}
and the latter goes to $0$ when taking a $\liminf_{t\to-\infty}$, according to the definition of $\kappa$. So $D_zH_0'=Id$ for every $z$, $|z|\le 1$, and hence since $H_0'(0)=0$ we get that $H_0'=Id$, which means $\cH_x=\bar\cH_x$.
\end{proof}

\section{Proof of Theorem~\ref{thm_flows}}

Recall that the weak distributions are $C^r$ by the assumption. Since the strong distributions are given by intersecting with the kernel of the contact form, they are also $C^r$ regular. Hence we apply the Subbundle Theorem~\ref{thm_tech} to $\phi_1^t$ and $\phi_2^t$ and obtain $C^r$ distributions $E_i\subset E^u_i$ and corresponding integral foliations $\cF_i\subset W^u_i$. By item 2 of the Subbundle Theorem we have that $W_i^s$ and $\cF_i$ are jointly integrable. Hence, by Lemma~\ref{lemma_contact} we have $\dim\cF_i=0$, that is, $\cF_i$ are foliations by points. Then item~4 gives uniform $C^r$ smoothness of $h$ along the unstable foliation. 

Entirely symmetric argument yields $C^r$ smoothness of $h$ along the stable foliation. Applying the Journ\'e Lemma first for the  unstable and flow foliations we have that $h$ is $C^{r_*}$ along the weak unstable foliation, then applying Journ\'e Lemma~\cite{J} to weak unstable and stable foliations we obtain that $h$ is $C^{r_*}$. Reversing the roles of the flows we obtain in the same way that $h^{-1}$ is $C^{r_*}$. Hence, $h$ is a $C^{r_*}$ diffeomorphism.

\section{Proof of the corollaries}

\begin{proof}[Proof of Corollary~\ref{thm_flows2}]
The conformal $r$-pinching assumption of the Corollary enables us to apply Proposition~\ref{prop_normal_forms} to both $\phi_1^t$ and $\phi_2^t$. In this way we have normal forms $\cH_x^i$, $i=1,2$, for $\phi_i^t$ along the unstable foliation $W_i^u$. 

By Theorem~\ref{thm_flows} the conjugacy is $C^r$, $r>1$. Define
$$
\bar\cH_x^{1}=\left(h|_{W_1^u(x)}\right)^{-1}\circ\cH_x^2\circ Dh|_{E_1^u(x)}\colon E_1^u(x)\to W^u_1(x)
$$
It is routine to verify that $\bar \cH_x^{1}$ satisfies properties 2-4 of Proposition~\ref{prop_normal_forms}. Also, since $h$ is $C^r$ we have that $D\bar\cH_x^{1}$ is uniformly $C^{r-1}$ at $x$ and, hence, verifies the main assumption of the Addendum~\ref{add_normal_forms}. We invoke the Addendum~\ref{add_normal_forms} and conclude that $\bar\cH_x^{1}=\cH_x^1$, $x\in M_1$. Hence
$$
h|_{W_1^u(x)}=\cH_x^2\circ Dh|_{E_1^u(x)}\circ (\cH_x^1)^{-1},
$$
which is $C^\infty$ regular as the normal forms are smooth for each $x$. Applying the same argument to the stable foliation and then using the Journ\'e Lemma in the same way as in the proof of Theorem~\ref{thm_flows} we establish that $h$ is a $C^\infty$ diffeomorphism.
\end{proof}

\begin{proof}[Proof of Corollaries~\ref{cor2b} and~\ref{cor2}] We will verify that the geodesic flows are bunched with $r=\sqrt 2$ and conformally $\sqrt 2$-pinched. Then applying Corollary~\ref{thm_flows2} finishes the proof of Corollary~\ref{cor2b}. Also notice that both bunching and conformal pinching conditions are open in $C^1$ topology, hence, Corollary~\ref{cor2} also follows.

So let $\phi^t$ be a $\frac{1}{a^2}$-pinched geodesic flow with $a=\sqrt 2$. We can rescale the metric so that all sectional curvatures lie the interval $(-a^2,-1]$. Then we can use the description of stable  (unstable) subbundle as the space of bounded in the future (past) Jacobi fields (see, \eg~\cite[Chapter VI]{E}) and, by comparison with constant-coefficients Jacobi equations $J''-J=0$ and $J''-a^2J=0$ we have
$$
e^t\le \|D\phi^t|_{E^u}\|<e^{at}, t>0,\,\,\,\,\mbox{and}\,\,\,\,\, e^{-at}<\|D\phi^t|_{E^s}\|\le e^{-t}, t>0.
$$
These give bounds on all needed norms and conorms. One can then easily see that bunching with parameter $r$ is implied by $e^{-t}e^{rat}\le e^t$ and conformal $r$-pinching is implied by $e^{at}\le e^{rt}$, which are equivalent to $ra\le 2$ and $a\le r$. Taking $r=a=\sqrt 2$ finishes the proof.
\end{proof}

\begin{proof}[Proof of Corollaries~\ref{cor3} and~\ref{cor4}]
Derivations of these corollaries follow closely~\cite{Ham}. It is well known that negatively curved homotopy equivalent manifolds have orbit equivalent geodesic flows. Then, by the classical application of the Livshits Theorem~\cite[Theorem 19.2.9]{KH}, same marked length spectrum implies existence of a $C^0$ conjugacy $h$ of the geodesic flows. Thus, because we have assumed that stable and unstable foliations are $C^1$ we can apply Theorem~\ref{thm_flows}. Formally speaking, it only yields Lipschitz regularity of $h$. However, in fact, it is easy to overcome the loss of regularity in this case and show that $h$ is a $C^1$ diffeomorphism. Indeed, recall that the loss from $r$ to $r_*$ happens at the very end of the proof of Theorem~\ref{thm_flows} which is due to application of Journ\'e Lemma. However this problem only occurs for integer $r\ge 2$. It is an easy calculus exercise to check that if $h$ is $C^1$ along a pair of transverse foliations then $h$ is a $C^1$ diffeomorphism. 

Now denote by $\alpha_i$ the canonical contact form for $\phi_i^t$. That is, $\alpha_i$ is a $D\phi_i^t$ invariant contact form such that $\alpha_i(X_i)=1$, $X_i=\frac{\partial\phi_i^t}{\partial t}\Big|_{t=0}$, $i=1,2$. Since $h$ is $C^1$ the pull-back form $h^*\alpha_2$ is well-defined and we have
$$
h^*\alpha_2(X_1)=\alpha_2(Dh(X_1))=\alpha_2(X_2)=1
$$
Also
$$
\ker h^*\alpha_2=Dh^{-1}(\ker\alpha_2)=Dh^{-1}(E_2^s)\oplus Dh^{-1}(E_2^u)=E_1^s\oplus E_1^u=\ker\alpha_1
$$
But the value on $X_1$ and the kernel determine a 1-form uniquely. Hence $h^*\alpha_2=\alpha_1$. We have the same for volume forms $\omega_i=\alpha_i\wedge(d\alpha_i)^d$:
$$
h^*\omega_2=h^*\alpha_2\wedge h^*(d\alpha_2)^d=\alpha_1\wedge(d\alpha_1)^d=\omega_1
$$
which implies that total volumes are the same
$$
vol(M)=\int_M\omega_1=\int_Mh^*\omega_2=\int_N\omega_1=vol(N)
$$
finishing the proof of Corollary~\ref{cor3}.

For the last corollary, notice that since $M$ is hyperbolic and $N$ is $1/4$-pinched they have $C^1$ Anosov splitting and, hence, the above proof applies to conclude that they have the same volume.  Since geodesic flows are conjugate they also have the same topological entropy which is well-known to coincide with the volume entropy on the universal covers $\tilde M$ and $\tilde N$. Hence, by the main result of~\cite{BCG} we can conclude that $M$ and $N$ are isometric.
\end{proof} 


 

\end{document}